\author[I.~V.~Arzhantsev]{Ivan V.~Arzhantsev}
\address{
Department of Higher Algebra\\ Faculty of Mechanics and
Mathematics\\ Moscow State University\\ Leninskie Gory, GSP-1, Moscow, 119991, Russia}
\email{arjantse@mccme.ru}
\urladdr{http://mech.math.msu.su/department/algebra/staff/arzhan.htm}
\author[A.~P.~Petravchuk]{Anatoliy P. Petravchuk}
\address{
Algebra Department \\ Faculty of Mechanics and Mathematics\\ Kyiv
Taras Shevchenko University\\ 64, Volodymyrskaia street, 01033 Kyiv,
Ukraine} \email{aptr@univ.kiev.ua}
\title[Saturated subfields and invariants of finite groups]
{Saturated subfields and invariants \\ of finite groups}
\keywords{Generative functions, finite groups, rational invariants}
\subjclass[2000]{Primary 12F20, 20B25; Secondary 13A50}
\thanks{Supported by grants NSh-1983.2008.1, DFFD F25.1/095 and RFFI 09-01-90416}
\newcommand{\kk}{\Bbbk}
\newcommand{\RR}{\mathbb{R}}
\newcommand{\CC}{\mathbb{C}}
\newcommand{\PP}{\mathbb{P}}
\newcommand{\GL}{\mathop{\mathrm{GL}}}
\newcommand{\SL}{\mathop{\mathrm{SL}}}
\newtheorem{theorem}{Theorem}
\newtheorem{corollary}{Corollary}
\newtheorem{lemma}{Lemma}
\theoremstyle{definition}
\newtheorem{example}{Example}
\theoremstyle{remark}
\theoremstyle{problem}
\begin{document}

\sloppy

\begin{abstract}
Every subfield $\kk(\phi)$   of the field of rational functions
$\kk(x_1,\dots,x_n)$ is contained in a unique maximal subfield of
the form $\kk(\psi)$. The element $\psi$ is called generative for
the element $\phi$. A subfield of $\kk(x_1,\dots,x_n)$ is called
saturated if it contains a generative element of each its
element. We study the saturation property for subfields of invariants $\kk(x_1,\dots,x_n)^{G}$,
where $G$ is a finite group of automorphisms of the field $\kk(x_1,\dots,x_n)$.
\end{abstract}

\maketitle

\section{Introduction}
Consider the polynomial ring $\kk[x_1,\dots,x_n]$ over a
field $\kk$. Recall that a polynomial $h\in
\kk[x_1,\dots,x_n]\setminus \kk$ is called {\it closed} if the
subalgebra $\kk[h]$ is integrally  closed  in
$\kk[x_1,\dots,x_n]$. For any polynomial $f \in
\kk[x_1,\dots,x_n]\setminus \kk$ there exist a closed polynomial
$h$ and a polynomial $F(t)\in\kk[t]$ such that $f=F(h)$. The
polynomial $h$ is determined by the polynomial $f$ up to
affine transformations $h\to \alpha h+\beta, \  \alpha \in
\kk^{\times}, \beta \in \kk,$ and is called {\it generative} for
the polynomial $f$.

A subalgebra $A\subseteq \kk[x_1,\dots,x_n]$ is said to be 
{\it saturated} if it contains generative polynomials of every its
nonconstant element. Clearly, any subalgebra $A$ which
is integrally closed in $\kk[x_1,\dots,x_n]$ is also saturated, but
the converse is not true. In the paper \cite{AP}, we studied,
among others, the saturation property for subalgebras of
invariants $\kk[x_1,\dots,x_n]^{G}$ of finite subgroups
$G\subseteq GL_{n}(\kk).$ 

\begin{theorem}\cite[Thm.~2]{AP} \label{T1} 
Let $\kk$ be a
field and $G\subseteq GL_{n}(\kk)$ be a finite subgroup. Then the
subalgebra of invariants $\kk[x_1,\dots,x_n]^{G}$ is saturated in
$\kk[x_1,\dots,x_n]$ if and only if the subgroup $G$ admits no
non-trivial homomorphisms $G\to \kk^{\times}.$
\end{theorem}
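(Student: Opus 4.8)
The plan is to reduce both implications to a single observation about how generative polynomials transform under the linear $G$-action.

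First I would record the following lemma. Let $h\in\kk[x_1,\dots,x_n]$ be closed, $F(t)\in\kk[t]$ and $f=F(h)$. For any $g\in G\subseteq\GL_n(\kk)$ the polynomial $g\cdot h$ is again closed, since $g$ acts by a $\kk$-algebra automorphism of $\kk[x_1,\dots,x_n]$ and such an automorphism carries a subalgebra integrally closed in $\kk[x_1,\dots,x_n]$ to another such subalgebra; and $g\cdot f=F(g\cdot h)$. Hence, if $g\cdot f=\lambda f$ with $\lambda\in\kk^{\times}$, then $F(g\cdot h)=(\lambda F)(h)$ with $h$ and $g\cdot h$ both closed, so $h$ and $g\cdot h$ are generative polynomials of one and the same polynomial; by the uniqueness recalled in the Introduction $g\cdot h=a_g h+b_g$ for some $a_g\in\kk^{\times}$, $b_g\in\kk$, and comparing the two expansions of $(g_1g_2)\cdot h$ shows $g\mapsto a_g$ is a homomorphism $G\to\kk^{\times}$. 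Finally I would use linearity: $G$ preserves the grading of $\kk[x_1,\dots,x_n]$ by degree and fixes the constant term $h_0$ of $h$, so comparing degree‑zero parts in $g\cdot h=a_gh+b_g$ gives $b_g=(1-a_g)h_0$, i.e. $g\cdot(h-h_0)=a_g(h-h_0)$. Replacing $h$ by $h-h_0$, we may assume the generative polynomial $h$ of $f$ is a semi-invariant of weight $a_{\bullet}$, and $h\in\kk[x_1,\dots,x_n]^G$ precisely when the character $a_{\bullet}$ is trivial.

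For the implication ``no non-trivial homomorphism $G\to\kk^{\times}$ $\Rightarrow$ saturated'', take $f\in\kk[x_1,\dots,x_n]^G\setminus\kk$ and apply the lemma with $\lambda=1$ for every $g$ to a generative polynomial $h$ of $f$. The resulting character $a_{\bullet}$ is trivial by hypothesis, so $b_g=(1-a_g)h_0=0$ and $g\cdot h=h$ for all $g$; thus $h\in\kk[x_1,\dots,x_n]^G$ and the invariant subalgebra is saturated. It is precisely the linearity of the $G$-action, forcing the additive term $b_g$ to vanish once $a_{\bullet}$ is trivial, that makes this uniform in all characteristics.

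For the converse I would prove the contrapositive. Suppose $G$ has a non-trivial character $\chi$; put $H=\Ker\chi$ and $m=|\chi(G)|\ge 2$, so $m$ is prime to $\cchar\kk$, $\kk$ contains a primitive $m$-th root of unity, $\bar G:=G/H$ is cyclic of order $m$, and $\bar G$ acts faithfully on $R:=\kk[x_1,\dots,x_n]^{H}$ (an element of $G$ acting trivially on $R$ acts trivially on $\operatorname{Frac}R=\kk(x_1,\dots,x_n)^{H}$, hence lies in $H$). Since $m$ is invertible in $\kk$ and all $m$-th roots of unity lie in $\kk$, $R$ splits as a direct sum of $\bar G$-weight spaces, and faithfulness forces a non-trivial weight space to be nonzero; this produces a nonzero $P\in\kk[x_1,\dots,x_n]$ with $g\cdot P=\psi(g)P$ for some non-trivial character $\psi$ of $G$. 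Every nonzero homogeneous component $f$ of $P$ again satisfies $g\cdot f=\psi(g)f$, and $f$ is nonconstant because $\psi\ne 1$. Then, with $\ell=|\psi(G)|$, the polynomial $f^{\ell}$ is a nonconstant element of $\kk[x_1,\dots,x_n]^G$. Let $h$ be a generative polynomial of $f$, say $f=F(h)$; since $f^{\ell}=F(h)^{\ell}\in\kk[h]$ and $h$ is closed, $h$ is a generative polynomial of $f^{\ell}$ and every generative polynomial of $f^{\ell}$ is an affine transform of $h$. No such transform $\alpha h+\beta$ with $\alpha\ne 0$ is $G$-invariant: by the lemma $g\cdot h=a_g h+b_g$, so invariance of $\alpha h+\beta$ would force $a_g=1$ and $b_g=0$ for all $g$, hence $g\cdot h=h$ and $g\cdot f=F(h)=f$, contradicting $g\cdot f=\psi(g)f$ with $\psi\ne 1$. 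Thus $\kk[x_1,\dots,x_n]^G$ contains the nonconstant invariant $f^{\ell}$ but none of its generative polynomials, and so is not saturated.

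The only step I expect to require real work is the production of the semi-invariant $f$ in the converse: it is there that the arithmetic of $G$ enters, through the cyclic subextension $\kk(x_1,\dots,x_n)^{H}/\kk(x_1,\dots,x_n)^{G}$; everything else is formal once the lemma on generative polynomials is available. Two standard facts are used tacitly and should be verified: that a $\kk$-algebra automorphism of $\kk[x_1,\dots,x_n]$ preserves closedness of polynomials, and that $\operatorname{Frac}(\kk[x_1,\dots,x_n]^{H})=\kk(x_1,\dots,x_n)^{H}$ for a finite group $H$, the latter following from Noether's finiteness theorem together with Artin's theorem on $\kk(x_1,\dots,x_n)/\kk(x_1,\dots,x_n)^{H}$.
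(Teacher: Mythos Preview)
The paper does not prove Theorem~\ref{T1}; it merely quotes it from the authors' earlier paper~\cite{AP}, so there is no in-paper argument to compare your proposal against. That said, your proof is sound in both directions. The forward implication is exactly the right mechanism: the ambiguity in a generative polynomial is an affine transformation, the linear action of $G$ kills the additive part via the degree-zero comparison $b_g=(1-a_g)h_0$, and the multiplicative part is the character that must be trivial. For the converse your detour through $H=\Ker\chi$ and the cyclic quotient $\bar G=G/H$ acting on $R=\kk[x_1,\dots,x_n]^{H}$ is the correct way to handle arbitrary characteristic: you only need $m=|\chi(G)|$ invertible in $\kk$ and the $m$-th roots of unity, both of which are automatic since $\chi(G)\subset\kk^{\times}$, so the weight-space decomposition of $R$ and the faithfulness of the $\bar G$-action genuinely produce a nonconstant homogeneous semi-invariant $f$ of nontrivial weight. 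The remaining verification that no affine transform of the generative $h$ of $f$ is invariant is clean. The two ``tacit'' facts you flag are indeed routine: closedness is preserved by any $\kk$-algebra automorphism, and $\operatorname{Frac}(\kk[x_1,\dots,x_n]^{H})=\kk(x_1,\dots,x_n)^{H}$ follows from the usual norm-of-denominator trick for finite groups, without appealing to Noether's bound.
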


In particular, the saturation property for the subalgebra
$\kk[x_1,\dots,x_n]^{G}$ depends on the field $\kk$ and on the
(abstract) group $G$, but does not depend on the matrix
realization of $G$.

The aim of this note is to investigate the saturation property 
for subfields of rational invariants. We start with some
definitions. Let $\kk(x_1,\dots,x_n)$ be the field of
rational functions and $\psi \in \kk(x_1,\dots,x_n)\setminus \kk$.
Recall that a subfield $L\subseteq \kk(x_1,\dots,x_n)$ is said
to be  {\it algebraically closed} in $\kk(x_1,\dots,x_n)$ if every
element of $\kk(x_1,\dots,x_n)$ which is algebraic over $L$
belongs to $L$. A rational function $\psi$ is called {\it closed}
if the subfield $\kk(\psi)$ is algebraically closed in
$\kk(x_1,\dots,x_n)$.  For any $\phi \in
\kk(x_1,\dots,x_n)\setminus \kk$ there exist a closed function
$\psi$ and an element $H(t)\in \kk(t)$ such that $H(\psi)=\phi$.
The element $\psi$ is determined up to transformations
$$ \psi \to \frac{\alpha \psi +\beta}{\gamma \psi +\delta}, \ \ \
\alpha , \beta , \gamma ,\delta \in \kk, \ \ \  \alpha \delta -\beta
\gamma \not= 0, \hfill \eqno (1)$$
see \cite[Thm.~6]{Ol} or \cite[Lemma~2]{PI}, and is called
{\it generative} for the element $\phi$.

Generative elements appear naturally in different problems. Let $\phi=\frac{P}{Q}$
be an uncancelled form of a rational function $\phi$. Consider 1-form $\omega=QdP-PdQ$. Its {\it field of constants} is a field of
rational functions $\xi\in\kk(x_1,\dots,x_n)$ satisfying $\omega\wedge d\xi=0$. It is known that this field is generated by the generative
element $\psi$: for two variables it goes back to classical works of A.~Poincar\'e on algebraic integration of
differential equations, and the general case is discussed in~\cite{Ol}. Further, if the kernel
of a $\kk$-derivation of the field $\kk(x_1,\dots,x_n)$ is one-dimensional, then it has a form 
$\kk(\psi)$ for some closed rational function $\psi$. Conversely, in~\cite{EON} for any closed rational function $\psi$ 
a $\kk$-derivation of the field $\kk(x_1,\dots,x_n)$ with the kernel $\kk(\psi)$ is constructed. 
Some characterizations of closed rational functions are obtained in \cite{Bo} and \cite{PI}. In \cite{Bo}, 
an analogue of Stein's theorem for rational functions is proved and an estimate of the number of reducible
fibers is obtained. Finally, in \cite[Sec.~7]{Ol} an algorithm that finds a generative element $\psi$ for a given
rational function $\phi$ is given. 

A subfield $L\subseteq \kk(x_1,\dots,x_n)$ is said to be {\it
saturated} if for any $\phi \in L\setminus \kk$ the generative
function $\psi$ of $\phi$ is contained in $L$. Clearly, 
every algebraically closed subfield of
$\kk(x_1,\dots,x_n)$ is saturated. The results below imply that the
converse is not true.

Let $G$ be a finite group of automorphisms of the
field $\kk(x_1,\dots,x_n)$. We study the saturation property for 
the subfield of invariants $\kk(x_1,\dots,x_n)^{G}$. It is
interesting to remark that, as follows from L\"uroth's Theorem, 
the subfield $\kk(x_1,\dots,x_n)^{G}$ is saturated in
$\kk(x_1,\dots,x_n)$ if and only if every one-dimensional
$G$-invariant subfield of $\kk(x_1,\dots,x_n)$ lies in
$\kk(x_1,\dots,x_n)^{G}$. Geometrically this means that 
every $G$-equivariant rational morphism from $\kk^{n}$ to a
curve is $G$-invariant.


\section{Main results}\label{s3}

The next result yields a sufficient condition for the subfield of
invariants to be saturated.

\begin{theorem}\label{T2}
Let $\kk$ be a field of characteristic zero and $G$ be a finite
group of automorphisms of the field $\kk(x_1,\dots,x_n)$.
Suppose that $G$ has neither nontrivial abelian factor
groups nor factor groups which are isomorphic to the alternating
group $A_{5}.$  Then the subfield $\kk(x_1,\dots,x_n)^{G}$ is
saturated in $\kk(x_1,\dots,x_n)$.
\end{theorem}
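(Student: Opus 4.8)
The strategy is to reduce the statement to a group-theoretic fact about finite subgroups of $\mathrm{PGL}_2$. By the reformulation recorded just before Section~\ref{s3} (a consequence of L\"uroth's Theorem), it suffices to prove that every $G$-invariant subfield $M\subseteq\kk(x_1,\dots,x_n)$ of transcendence degree one is contained in $\kk(x_1,\dots,x_n)^{G}$. So I would fix such an $M$ and aim to show that $G$ acts trivially on it. First I would apply L\"uroth's Theorem (in the form valid for transcendence-degree-one subfields of a rational function field) to write $M=\kk(\theta)$. Since $M$ is $G$-invariant and $G$ acts by $\kk$-automorphisms of $\kk(x_1,\dots,x_n)$, restriction defines a homomorphism $G\to\Aut_{\kk}(M)$; writing $H$ for its kernel and $N=G/H$ for its image, $N$ is a finite factor group of $G$ acting faithfully on $M$. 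As $M\cong\kk(\theta)$ is a rational function field in one variable over $\kk$, the group $\Aut_{\kk}(M)$ consists exactly of the M\"obius substitutions $(1)$, i.e. $\Aut_{\kk}(M)\cong\mathrm{PGL}_2(\kk)$; hence $N$ is a finite subgroup of $\mathrm{PGL}_2(\kk)$.

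Next I would invoke the classification of finite subgroups of $\mathrm{PGL}_2$ over a field of characteristic zero: every such subgroup is cyclic, dihedral, or isomorphic to one of $A_4$, $S_4$, $A_5$. Then I would rule out every nontrivial possibility using the hypotheses on $G$. If $N$ is dihedral, or isomorphic to $A_4$ or $S_4$, then $N$ has a nontrivial abelian quotient ($\ZZ/2$, $\ZZ/3$, $\ZZ/2$ respectively), which would be a nontrivial abelian factor group of $G$; if $N\cong A_5$, then $A_5$ is a factor group of $G$; both contradict the assumptions. Hence $N$ is cyclic, but then $N$ is itself an abelian factor group of $G$, so $N=\{1\}$. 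Therefore $G$ fixes $M$ pointwise, i.e. $M\subseteq\kk(x_1,\dots,x_n)^{G}$, and by the reformulation above this proves that $\kk(x_1,\dots,x_n)^{G}$ is saturated.

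The real content is thus concentrated in two places: the quoted reformulation (already available to us) and the classification of finite subgroups of $\mathrm{PGL}_2$ in characteristic zero. This classification is the only point where the characteristic-zero hypothesis enters, and it is exactly what breaks in positive characteristic, where $\mathrm{PGL}_2$ contains large finite subgroups such as $\mathrm{PSL}_2(\FF_q)$; so I would not expect the argument to extend there without further hypotheses. The two technical points to be checked carefully, though I do not expect either to cause genuine difficulty, are that L\"uroth's Theorem indeed applies to a transcendence-degree-one subfield of $\kk(x_1,\dots,x_n)$ (so that $M$ is rational) and that $G$ acts by $\kk$-automorphisms (so that the induced action on $M$ lands in $\mathrm{PGL}_2(\kk)$). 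It is also worth noting, as a sanity check, that the two hypotheses on $G$ are precisely the ones forced by this method: among the groups on the $\mathrm{PGL}_2$ list, the only ones admitting no nontrivial abelian factor group are the trivial group and $A_5$.
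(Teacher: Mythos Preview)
Your argument is correct and follows essentially the same route as the paper: obtain from a non-invariant generative element (equivalently, from a $G$-invariant one-dimensional subfield) a nontrivial homomorphism $G\to\mathrm{PGL}_2(\kk)$, then invoke the classification of finite subgroups of $\mathrm{PGL}_2$ in characteristic zero to force a contradiction with the hypotheses. The only cosmetic differences are that the paper works directly with the generative element and formula~(1) rather than the L\"uroth reformulation, phrases the classification as ``solvable or $A_5$'' rather than listing the groups, and makes explicit the reduction to $\CC$ (the homomorphism is defined over a finitely generated extension of $\QQ$, which embeds in $\CC$) before citing the classical result for $\mathrm{PSL}_2(\CC)$.
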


\begin{proof}
Suppose that the generative function $\psi$ of an element $\phi \in
\kk(x_1,\dots,x_n)^G\setminus \kk$ is not invariant. It follows from
(1) that the element $\psi$ is determined by
$\phi$ up to the action of the group
$\rm{PGL_{2}}(\kk)$. Therefore the group $G$ admits a non-trivial
homomorphism to $\rm{PGL_{2}}(\kk)$. This homomorphism is
defined over a finitely generated extension of the field of
rational numbers. Any such extension is isomorphic to a
subfield of the field of complex numbers $\CC$. On the other hand,
from the classification of finite subgroups in $\rm{PSL}_{2}(\CC
)=\rm{PGL}_{2}(\CC)$ (see, for example, \cite[4.4]{Sp})
it follows that every such subgroup is either solvable or
isomorphic to the group $A_{5}$. This contradiction completes the proof
of the theorem.
\end{proof}

\begin{example}\label{ex1}
The automorphism group of the field $\CC(x)$ is isomorphic to
$\rm{PSL}_{2}(\CC).$ Let $G\subset \rm{PSL}_{2}(\CC)$ be the group
of rotations of icosahedron, which is isomorphic to $A_{5}$. Then
the subfield $\CC(x)^{G}$ is not saturated in $\CC(x)$.
\end{example}

Now we consider subfields of invariants which correspond to 
(regular) representations of a finite group $G.$  Note that the
group $\rm{PSL}_{2}(\CC)$ contains a unique (up to conjugation)
subgroup isomorphic to $A_5$. Denote its preimage in
$\rm{SL}_{2}(\CC)$ by $I_{120}.$ This is a group of order $120$.
Since $A_{5}$ admits no non-trivial two-dimensional
representations, the subgroup $I_{120}$ has no  subgroups
isomorphic to $A_{5}$. The center $Z$ of $I_{120}$ consists of
two elements, and $I_{120}/Z\simeq A_{5}.$ Therefore the group
$I_{120}$ coincides with its commutator subgroup.

\begin{theorem}\label{th3}
Let $\kk$ be an algebraically closed field of characteristic zero
and $G\subset \GL_n(\kk)$ be a finite subgroup. The
following conditions are equivalent:

(i) the group $G$ has  neither nontrivial abelian factor groups
nor factor groups isomorphic to $I_{120};$

(ii) the subfield $\kk(x_1,\dots,x_n)^{G}$ is saturated in
$\kk(x_1,\dots,x_n)$.

\end{theorem}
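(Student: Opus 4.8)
The plan is to prove the two implications separately, treating (ii) $\Rightarrow$ (i) in contrapositive form. Both directions use the reformulation recorded in the introduction: by Lüroth's Theorem, $\kk(x_1,\dots,x_n)^{G}$ fails to be saturated exactly when there is a $\theta\in\kk(x_1,\dots,x_n)\setminus\kk$ such that $\kk(\theta)$ is a $G$-invariant subfield on which $G$ acts nontrivially, equivalently the homomorphism $\bar\rho\colon G\to\mathrm{PGL}_2(\kk)$ recording the action of $G$ on $\kk(\theta)$ by Möbius transformations is nontrivial. The second ingredient is the classical fact that, since $U=\langle x_1,\dots,x_n\rangle$ is a faithful $G$-module and $\cchar\kk=0$, every irreducible representation $W$ of $G$ occurs in some homogeneous component $\kk[x_1,\dots,x_n]_{d}$: the generating series $\sum_{d\ge 0}\dim\operatorname{Hom}_{G}(W,\kk[x_1,\dots,x_n]_{d})\,q^{d}$ equals $\frac1{|G|}\sum_{g\in G}\chi_{W^{*}}(g)\det(1-q\,g|_{U})^{-1}$, which is nonzero because the summand $g=e$ has a pole of order $n$ at $q=1$ while every other summand has pole order $\dim U^{g}<n$.

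For (i) $\Rightarrow$ (ii), suppose $\kk(x_1,\dots,x_n)^{G}$ is not saturated and fix such a $\theta$, with $\bar\rho\ne 1$. As in the proof of Theorem~\ref{T2}, the finite group $\bar\rho(G)$ is defined over a finitely generated, hence $\CC$-embeddable, subfield of $\kk$, so it is either a nontrivial solvable group or isomorphic to $A_{5}$. If it is nontrivial solvable it has a nontrivial abelian quotient, whence $G$ has a nontrivial abelian factor group, contradicting (i). If $\bar\rho(G)\cong A_{5}$, write $\theta=P/Q$ with coprime $P,Q\in\kk[x_1,\dots,x_n]$; a short computation with reduced fractions shows that $gP,gQ$ again span $W:=\langle P,Q\rangle$ for each $g\in G$, so $W$ is a two-dimensional $G$-submodule of $\kk[x_1,\dots,x_n]$ whose projectivized action $G\to\mathrm{PGL}(W)$ has the same kernel $\{g:g\theta=\theta\}$ as $\bar\rho$, hence image isomorphic to $A_{5}$. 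Let $\rho_{W}\colon G\to\GL(W)\cong\GL_2(\kk)$ denote the $G$-action on $W$. Since (i) forbids nontrivial abelian factor groups, $G$ has no nontrivial homomorphism to $\kk^{\times}$, so $\det\rho_{W}$ is trivial and $\rho_{W}(G)\subseteq\SL_2(\kk)$. Then $\rho_{W}(G)$ lies in the preimage of an $A_{5}\subset\mathrm{PGL}_2(\kk)$ under $\SL_2(\kk)\to\mathrm{PGL}_2(\kk)$; as $\kk$ is algebraically closed and $A_{5}$ has no faithful two-dimensional representation, this preimage is the nonsplit extension $I_{120}$, and since $\rho_{W}(G)$ surjects onto $A_{5}$ while $I_{120}$ has no subgroup isomorphic to $A_{5}$, we get $\rho_{W}(G)=I_{120}$. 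Thus $G$ has a factor group isomorphic to $I_{120}$, contradicting (i).

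For (ii) $\Rightarrow$ (i), assume $G$ has a nontrivial abelian factor group or a factor group isomorphic to $I_{120}$; I exhibit a $\theta$ witnessing non-saturation. If $G\twoheadrightarrow A\ne 1$ with $A$ abelian, then since $\kk$ is algebraically closed of characteristic zero there is a nontrivial character $\chi\colon G\to\kk^{\times}$, and by the second ingredient the one-dimensional module $\kk_{\chi}$ occurs in some $\kk[x_1,\dots,x_n]_{d}$, giving a nonconstant semi-invariant $f$ with $g\cdot f=\chi(g)f$; then $\kk(f)$ is a $G$-invariant one-dimensional subfield on which $G$ acts nontrivially. If $G\twoheadrightarrow I_{120}\subset\SL_2(\kk)$, pull back the standard two-dimensional representation of $I_{120}$ to an irreducible two-dimensional $G$-module $V$; by the second ingredient a copy $W\cong V$ lies in some $\kk[x_1,\dots,x_n]_{d}$, say $W=\langle p,q\rangle$, and then $\theta:=p/q$ is nonconstant, $\kk(\theta)$ is $G$-invariant, and $G$ acts on $\kk(\theta)$ through $G\twoheadrightarrow I_{120}\to\mathrm{PGL}_2(\kk)$ with image $A_{5}\ne 1$. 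In either case the reformulation shows $\kk(x_1,\dots,x_n)^{G}$ is not saturated.

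I expect the crux to be the appearance of $I_{120}$ rather than just $A_{5}$ in condition (i). What makes it work is that a nontrivial Möbius action of $G$ on a one-dimensional subfield of $\kk(x_1,\dots,x_n)$ is automatically realized by an honest two-dimensional linear representation $\rho_{W}$ of $G$—because the defining fraction of $\theta$ already lives in the polynomial ring—and that, once $G$ has no nontrivial characters, this representation lands in $\SL_2(\kk)$, where its image is forced to be the whole binary icosahedral group by the nonsplitness of $1\to Z\to I_{120}\to A_{5}\to 1$ (equivalently, $I_{120}$ has no subgroup isomorphic to $A_{5}$); the converse uses dually that every irreducible $G$-representation, in particular this two-dimensional one, already occurs among the homogeneous polynomials. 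I would also double-check that the characteristic-zero hypothesis is used only through Burnside's theorem (for the second ingredient) and the classification of finite subgroups of $\mathrm{PGL}_2(\CC)$, and that identifying the preimage of $A_{5}$ in $\SL_2(\kk)$ with $I_{120}$ genuinely uses that $\kk$ is algebraically closed.
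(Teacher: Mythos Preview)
Your proof is correct and follows the same strategy as the paper: in both directions the key is to pass from a $G$-invariant one-dimensional subfield $\kk(\theta)$ to the two-dimensional linear $G$-module $\langle P,Q\rangle$ spanned by the numerator and denominator of a reduced fraction, and to use that every irreducible $G$-representation occurs in $\kk[x_1,\dots,x_n]$. The only differences are cosmetic: you prove the occurrence of irreducibles via a Molien-series pole count whereas the paper gives a short orbit/function-space argument, and in the $(ii)\Rightarrow(i)$ direction you invoke the L\"uroth reformulation directly to exhibit $\kk(p/q)$ as a $G$-invariant subfield with nontrivial action, while the paper instead packages this step as a separate lemma producing an explicit invariant (a symmetric function of the $G$-orbit of $p/q$) whose generative element is not invariant.
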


\begin{proof}

(i)$\Rightarrow$(ii)\ Suppose that there exists an element $\phi \in
\kk(x_1,\dots,x_n)^G\setminus \kk$ whose generative element $\psi
=\frac{p}{q}, \ p, q\in \kk[x_1,\dots,x_n]$  is not invariant.
Since the subfield $\kk(\psi)$ is $G$-invariant, so is the
subspace generated by $p$ and $q$. Thus the group $G$ admits a
nontrivial homomorphism to the group $\rm{GL}_{2}(\kk).$ By
assumptions, the image of $G$ is contained in  the subgroup
$\rm{SL}_{2}(\kk)$. Further, the image of $G$ in $\text{Aut}(\kk(\psi))$
should be a subgroup isomorphic to $A_5$.
Therefore the image of $G$ in $\SL_2(\kk)$ coincides with
$I_{120}$, which yields a contradiction.

(ii)$\Rightarrow$(i)\ If the group $G$ has a nontrivial abelian factor
group, then the subalgebra $\kk[x_1,\dots,x_n]^{G}$ is not
saturated in $\kk[x_1,\dots,x_n]$ (Theorem \ref{T1}). Hence the
subfield $\kk(x_1,\dots,x_n)^{G}$ is also not saturated in
$\kk(x_1,\dots,x_n).$ In the case when $G$ admits a surjective
homomorphism onto the group $I_{120}$ we need an auxiliary lemma.
It is a well-known statement from representation theory, but having no reference we 
give a short proof.

\begin{lemma}\label{known}
Let $\kk$ be a field of characteristic zero and $G\subset
\rm{GL}_{n}(\kk)$ be a finite subgroup. Then every irreducible
representation of the group $G$ can be realized as a
subrepresentation in the $G$-module $\kk[x_1,\dots,x_n]$.
\end{lemma}
\begin{proof}
Since the subspace of $g$-fixed vectors in $\kk^{n}$ is proper for
every $g\in G, \ g\not= 1$, there exists a vector $v\in \kk^{n}$
for which the mapping $\gamma _{v}: G\to \kk^{n}, \ \gamma
_{v}(g)=gv$ is injective. Let $F(G)$ be the space of $\kk$-valued
functions on the group $G$  with the canonical structure of
$G$-module. The natural homomorphism
$$\gamma ^{\star}_{v}: \kk[x_1,\dots,x_n] \to F(G), \ \ (\gamma
^{\star}_{v})(g)=f(\gamma _{v}(g))$$ is $G$-equivariant and
surjective. On the other hand, the $G$-module $F(G)$ is 
dual to the group algebra of the group $G$ and therefore
every simple $G$-module is contained in $F(G)$ with nonzero
multiplicity.
\end{proof}

 Consider a $G$-submodule $U=\langle p,q\rangle$ in $\kk[x_1,\dots,x_n]$
which is isomorphic to the simple two-dimensional
$I_{120}$-module.

 \begin{lemma}\label{L1}
 Let $\kk$ be a field and $H\subseteq \rm{GL}_{2}(\kk)$
 be a finite subgroup containing a non-scalar matrix. Then the
 subfield $\kk(y_{1}, y_{2})^{H}$ is not saturated in $\kk(y_{1},
 y_{2}).$
\end{lemma}

\begin{proof}
Consider the finite set of elements $\{ \frac{g\cdot y_{1}}{g\cdot
y_{2}}: g\in G\} $. At least one  of elementary symmetric
polynomials $\sigma _{i}$ of these elements is non-constant, 
and a generative element for $\sigma _{i}\in \kk(y_{1},
y_{2})^{H}$ is $\frac{y_{1}}{y_{2}} \not\in \kk(y_{1},
y_{2})^{H}.$

\end{proof}

Applying Lemma~\ref{L1} to the image $H$ of $G$ in
$\rm{GL}(U)$, we get the statement.
\end{proof}

\begin{corollary} \label{SC}
Let $G\subset \GL_{n}(\kk)$ be a subgroup isomorphic to
$A_5$. Then the subfield $\kk(x_1,\dots,x_n)^G$ is
saturated in $\kk(x_1,\dots,x_n)$.
\end{corollary}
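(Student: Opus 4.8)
The plan is to obtain Corollary~\ref{SC} from Theorem~\ref{th3}, using the simplicity of $A_5$, and then to record the short self-contained argument that in fact works over an arbitrary field of characteristic zero. For the first route I would simply note that $A_5$ is simple, so its only factor groups are the trivial group and $A_5$ itself; neither of these is a nontrivial abelian group, and neither is isomorphic to $I_{120}$ (a group of order $120\neq 60$). Hence condition (i) of Theorem~\ref{th3} is satisfied vacuously for any $G\cong A_5$, and Theorem~\ref{th3} yields condition (ii): the subfield $\kk(x_1,\dots,x_n)^{G}$ is saturated. This disposes of the case of an algebraically closed field of characteristic zero.

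For a general field of characteristic zero I would argue directly, imitating the proof of the implication (i)$\Rightarrow$(ii) of Theorem~\ref{th3} but replacing the appeal to the classification of finite subgroups of $\mathrm{PGL}_2$ by elementary character theory of $A_5$. Assume, for contradiction, that some $\phi\in\kk(x_1,\dots,x_n)^{G}\setminus\kk$ has a generative element $\psi=p/q$, with $p,q\in\kk[x_1,\dots,x_n]$ coprime, which is not $G$-invariant. Exactly as in the proof of Theorem~\ref{th3}, the $G$-invariance of the subfield $\kk(\psi)$ forces the two-dimensional subspace $\langle p,q\rangle\subseteq\kk[x_1,\dots,x_n]$ to be $G$-stable, and this produces a representation $\rho\colon G\to\GL_2(\kk)$.

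The decisive point is that a group isomorphic to $A_5$ admits no nontrivial representation of dimension at most $2$ over a field of characteristic zero: a one-dimensional representation factors through the abelianization $A_5^{\mathrm{ab}}=1$, while a two-dimensional representation, after extension of scalars to $\overline{\kk}$, splits into one-dimensional constituents (the complex irreducible characters of $A_5$ have degrees $1,3,3,4,5$), each necessarily trivial; so such a representation becomes trivial over $\overline{\kk}$ and is therefore already trivial over $\kk$. Consequently $\rho$ is trivial, i.e.\ $g\cdot p=p$ and $g\cdot q=q$ for every $g\in G$, whence $\psi=p/q\in\kk(x_1,\dots,x_n)^{G}$ — contradicting the choice of $\psi$. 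Thus $\kk(x_1,\dots,x_n)^{G}$ is saturated.

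The only non-formal ingredient is the assertion that $\langle p,q\rangle$ is canonically $G$-invariant, which is precisely the input already used in Theorem~\ref{th3}, so I expect that to be the one place demanding care: one must use that for a closed rational function $\psi$ written in lowest terms the linear span of its numerator and denominator depends only on $\kk(\psi)$ (equivalently, on the associated morphism to $\PP^{1}$), and hence is preserved by every automorphism fixing $\kk(\psi)$. It is instructive to contrast this with Example~\ref{ex1}: there $A_5$ acts on $\CC(x)$ by automorphisms that preserve no linear (polynomial) structure, so no two-dimensional $G$-module is available, and indeed the invariant subfield fails to be saturated.
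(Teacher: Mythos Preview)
Your first paragraph is exactly the paper's argument: the corollary is stated immediately after Theorem~\ref{th3} with no separate proof, so it is meant to follow by checking condition~(i) for $G\cong A_5$, and your verification via simplicity and the order count $|I_{120}|=120\neq 60$ is precisely what is intended.

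The remainder of your write-up goes beyond the paper. The paper's corollary tacitly inherits the standing hypothesis of Theorem~\ref{th3} that $\kk$ is algebraically closed of characteristic zero; your second argument removes the algebraic-closedness assumption by replacing the appeal to the classification of finite subgroups of $\mathrm{PGL}_2$ with the elementary fact that the irreducible complex characters of $A_5$ have degrees $1,3,3,4,5$, so any two-dimensional representation in characteristic zero is trivial. This is correct and is strictly stronger than what the paper records; note that Theorem~\ref{T2} cannot be invoked here because $A_5$ is itself a factor group of $A_5$. Your cautionary remark about the $G$-stability of $\langle p,q\rangle$ is well placed: it uses that $G\subset\GL_n(\kk)$ acts by automorphisms of the polynomial ring (so coprimeness of $p,q$ is preserved), which is exactly the setting of both Theorem~\ref{th3} and the corollary, and is the reason the argument does not contradict Example~\ref{ex1}.
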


 It follows from Corollary~\ref{SC} and Example~\ref{ex1} that in the case of
subfields of invariants the saturation property depends not only on the field
$\kk$ and the group $G$, but also on the representation of $G$ by means
of automorphisms of the field $\kk(x_1,\dots,x_n)$.

\smallskip

Since the group $A_5$ admits a faithful three-dimensional representation, Corollary~\ref{SC}
allows for $n\ge 3$ to construct a saturated subfield in $\kk(x_1,\dots,x_n)$ with $\kk(x_1,\dots,x_n)$
being algebraic over this subfield. Clearly, a subfield $\kk\subset L\subseteq\kk(x_1)$ is saturated if and only if
$L=\kk(x_1)$. It remains to consider the case $n=2$. Lemma~\ref{L1} and Theorem~\ref{T1}
imply that for a proper finite subgroup $G\subset\GL_2(\kk)$
the subfield of invariants $\kk(x_1,x_2)^G$ in not saturated. Nevertheless, there is a saturated subfield 
of invariants in $\kk(x_1,x_2)$. As shown in~\cite{MBD}, see also \cite[p.~78]{SY}, 
there is a subgroup $F$ of order 1080 in the group $\SL_3(\kk)$  such that the factor group $F/Z$, where $Z$ is the center of $\SL_3(\kk)$, is
isomorphic to the alternating group $A_6$. This defines an effective action of the group $A_6$ 
on the projective plane $\PP^2$ and on the field $\kk(x_1,x_2)$. By Theorem~\ref{T2}, the subfield 
$\kk(x_1,x_2)^{A_6}$ is saturated in $\kk(x_1,x_2)$. 

\smallskip

 Finally, we give three examples which show that saturation
of the subalgebra of invariants does not imply saturation of the
field of invariants.

\begin{example}
 Consider the following subgroups:
\begin{enumerate}
\item[$\bullet$] $G=I_{120}\subset \rm{GL}_{2}(\CC)$;
\item[$\bullet$] $G=\langle \theta \rangle \subset
\rm{GL}_{2}(\RR)$, where $\theta$ is the $120^{\circ}$-rotation of $\RR^{2}$; 
\item[$\bullet$] $G=\langle \tau \rangle \subset
\rm{GL}_{2}(\kk)$, where $\tau (x_{1})=x_{2}, \ \tau
(x_{2})=x_{1}$, and $\kk$ is a field of characteristic two.
\end{enumerate}
\end{example}
 
 In each of these cases the subalgebra $\kk[x_{1}, x_{2}]^{G}$ is
saturated in $\kk[x_{1}, x_{2}]$ (Theorem~\ref{T1}), but the
subfield $\kk(x_{1}, x_{2})^{G}$ is not saturated
in $\kk(x_{1}, x_{2})$ (Lemma~\ref{L1}).

\smallskip

The authors are grateful to Yu.G.~Prokhorov for the reference to~\cite{SY}.

\end{document}